\renewcommand{\div}{\mbox{div}}
\DeclareFontFamily{OT1}{rsfs}{}
\DeclareFontShape{OT1}{rsfs}{m}{n}{ <-7> rsfs5 <7-10> rsfs7 <10-> rsfs10}{}
\DeclareMathAlphabet{\mathscr}{OT1}{rsfs}{m}{n}
\newcommand{\bel}[1]{\begin{equation}\label{#1}}
\newcommand{\beal}[1]{\begin{eqnarray}\label{#1}}
\newcommand{\beadl}[1]{\begin{deqarr}\label{#1}}
\newcommand{\eeadl}[1]{\arrlabel{#1}\end{deqarr}}
\newcommand{\eeal}[1]{\label{#1}\end{eqnarray}}
\newcommand{\eead}[1]{\end{deqarr}}
\newcommand{\eea}{\end{eqnarray}}
\newcommand{\eeaa}{\end{eqnarray*}}
\newcommand{\be}{\begin{equation}}
\newcommand{\ee}{\end{equation}}
\DeclareFontFamily{OT1}{rsfs}{}
\DeclareFontShape{OT1}{rsfs}{m}{n}{ <-7> rsfs5 <7-10> rsfs7 <10->
rsfs10}{} \DeclareMathAlphabet{\mycal}{OT1}{rsfs}{m}{n}
\newcounter{mnotecount}[section]
\newcommand{\N}{{\Bbb N}}
\newcommand{\rmnote}[1]{}
\newcommand{\Ric}{\operatorname{Ric}}
\def\mysavedown#1{\edef\mysubs{\mysubs#1}}
\def\mysaveup#1{\edef\mysups{\mysups#1}}
\def\mydown#1{{\mytensor}_{\vphantom{\mysubs}#1}}
\def\myup#1{{\mytensor}^{\vphantom{\mysups}#1}}
\def\tensor#1#2{
  #1
  \def\mytensor{\vphantom{#1}}
  \def\mysubs{\relax}
  \def\mysups{\relax}
  \let\down=\mysavedown
  \let\up=\mysaveup
  #2
  \let\down=\mydown
  \let\up=\myup
  #2
  }
\newcommand{\Riem}{\operatorname{Riem}}
\newcommand{\Tr}{\operatorname{Tr}}
\newcommand{\R}{\mathbb R}
\renewcommand{\S}{\mathbb S}
\renewcommand{\div}{\operatorname{div}}
\DeclareMathOperator{\Hess}{Hess}
\renewcommand{\phi}{\varphi}
\renewcommand{\epsilon}{\varepsilon}
\renewcommand{\hat}{\widehat}
\def\crn#1#2{{\vcenter{\vbox{
        \hbox{\kern#2pt \vrule width.#2pt height#1pt
           }
          \hrule height.#2pt}}}}
\newcommand{\Ein}{\operatorname{Ein}}
\renewcommand{\hbar}{{\overline h}}
\newcommand{\pre}[2]{{{\vphantom{#2}}^{#1}}\kern-.2ex{#2}}
\theoremstyle{plain}
\newtheorem{theorem}{Théorème}[section]
\newtheorem{lemma}[theorem]{Lemme}
\newtheorem{proposition}[theorem]{Proposition}
\theoremstyle{definition}
\newtheorem{remark}[theorem]{Remarque}
\numberwithin{equation}{section}
\date{03 avril 2014}
\begin{document}
\title[Inversion d'opérateurs de courbure sur $\R^n$]
{Inversion d'opérateurs de courbures  au voisinage de la métrique euclidienne}

\author[E. Delay]{Erwann Delay}
\address{Erwann Delay,
Labo. de Math. d'Avignon,
 Fac. des Sciences,
33 rue Louis Pasteur, F-84000 Avignon, France}
\email{Erwann.Delay@univ-avignon.fr}
\urladdr{http://www.univ-avignon.fr/fr/recherche/annuaire-chercheurs\newline$\mbox{ }$\hspace{3cm}
/membrestruc/personnel/delay-erwann-1.html}

\begin{abstract}
Nous montrons que certains opérateurs affines en la courbure de Ricci sont localement
inversibles, dans des espaces de Sobolev à poids, au voisinage de la métrique euclidienne.
\end{abstract}

\maketitle

\noindent {\bf Mots clefs } : Courbure de Ricci, 
2-tenseurs symétriques, EDP elliptique quasi-linéaire, espaces de Sobolev à poids.
\\
\newline
{\bf 2010 MSC} : 53C21, 53A45,  58J05, 58J37, 35J62.
\\
\newline
\tableofcontents

\section{Introduction}\label{section:intro}
Sur  une variété Riemannienne $(M,g)$, considérons $\Ric(g)$ sa courbure de Ricci  et $R(g)$ sa courbure scalaire.
Parmi les (champs de) 2-tenseurs symétriques géométriques naturels que l'on peut construire,
les plus simples sont ceux qui seront "affines" en la courbure de Ricci, autrement dit, de la forme
$$
\Ein(g):=\Ric(g)+\kappa R(g)g+\Lambda g,
$$
o\`u $\kappa$ et $\Lambda$ sont des constantes.
Ainsi, si $\kappa=\Lambda=0$ on retrouve la courbure de Ricci, si $\kappa=-\frac12$  le tenseur d'Einstein (avec constante cosmologique $\Lambda$), enfin si $\kappa=-\frac1{2(n-1)}$ et $\Lambda=0$ le 
tenseur de Schouten.
Rappelons que ce tenseur est géométriquement  naturel dans le sens o\`u  pour tout difféomorphisme $\varphi$ assez régulier,
$$
\varphi^*\Ein(g)=\Ein(\varphi^*g).
$$
Nous nous posons ici le problème de l'inversion de l'opérateur $\Ein$.
On se donne donc $E$ un champ de tenseur symétrique sur $M$,  on cherche $g$ métrique riemannienne  telle
\bel{mainequation}
\Ein(g)=E.
\ee
On doit ainsi résoudre un système quasi-linéaire particulièrement complexe.
Le cas de la courbure de Ricci prescrite remonte
aux  années 80. 
DeTurck \cite{Deturck:ricci}, en 1981, a tout d'abord montr\'e un r\'esultat
d'existence locale au voisinage d'un point $p$  (il a depuis entrepris une longue étude systématique
pour le cadre local, comme le montrent ses  travaux en 1999
\cite{Deturckrank1}).

Puis il y a eu des  r\'esultats {\it globaux} :  
\cite{Deturckdim2} sur le cas tr\`es particulier de la
dimension 2, pour les surfaces {\it compactes}.  Hamilton \cite{Hamilton1984}, a traité le cas
de la sph\`ere unit\'e de $\R^{n+1}$ (avec $n>2$) en prouvant un
r\'esultat d'inversion locale au voisinage de la m\'etrique
standard.
Nous avions ensuite prouvé un résultat analogue sur l'espace hyperbolique réel \cite{Delay:etude},
et complexe \cite{DelayHerzlich}, au voisinage de la métrique canonique.

Ce type d'inversion locale de l'opérateur de Ricci a été aussi adapté à certaines variétés d'Einstein \cite{DeturckEinstein}, \cite{Delay:study}, \cite{Delanoe2003}.

Notons qu'il existe aussi des r\'esultats d'obstruction sur l'inversion de la courbure de
Ricci  \cite{Deturck-Koiso}, \cite{Baldes1986},
 \cite{Hamilton1984}, \cite{Delanoe1991}, \cite{Delay:etude}.
 
Le but de cet article est de prouver un résultat d'existence locale sur $\R^n$ près de la
métrique euclidienne $\delta$. Nous travaillons pour cela dans des espaces de Sobolev
à poids $H^{s,t}$ de fonctions (ou champs de tenseurs) $u$ telles que 
$\langle x\rangle^{t}u$ est dans l'espace de Sobolev classique $H^s$ (voir section
\ref{sec:Hst} pour une définition plus précise).

\begin{theorem}\label{maintheorem}
Soient  $s,t,\kappa,\Lambda\in\R$ tels que $s>\frac n2$,  $t\geq 0$, $\kappa>-\frac1{2(n-1)}$ et $\Lambda>0$.   Alors pour tout $e\in H^{s+2,t}(\R^n,\mathcal S_2)$ proche de zéro,  il existe $h$ proche de zéro dans 
$H^{s+2,t}(\R^n,\mathcal S_2)$ telle que
$$
\Ein(\delta+h)=\Ein(\delta)+e.
$$
De plus l'application $e\mapsto h$ est lisse au voisinage de zéro entre les espaces de Hilbert correspondants. 
\end{theorem}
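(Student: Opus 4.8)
The plan is to solve the quasilinear system by the inverse function theorem in the weighted Hilbert spaces, so that the whole statement — existence of $h$ and the smooth dependence $e\mapsto h$ — reduces to proving that the linearization $L:=D\Ein_\delta$ at the flat metric is invertible on the relevant weighted Sobolev spaces. Since $s>\tfrac n2$ and $t\ge0$, the space $H^{s+2,t}(\R^n,\mathcal S_2)$ is a Banach algebra on which $g^{-1}=(\delta+h)^{-1}$ and the coefficients of $\Ein$ depend analytically on $h$ near $0$; as $\Ein$ is second order in the metric, the map $h\mapsto\Ein(\delta+h)-\Ein(\delta)$ is then a smooth map from a neighbourhood of $0$ in $H^{s+2,t}$ into a target space obtained by losing two derivatives, with differential $L$ at the origin. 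Note also that $\Ein(\delta)=\Lambda\delta$, since $\delta$ is Ricci-flat.

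First I would compute $L$ at $\delta$. Because $\delta$ is flat and translation invariant, $L$ has constant coefficients; using $\Ric(\delta)=0$, $R(\delta)=0$ and the linearizations $D\Ric_\delta$ and $DR_\delta\,h=\div\div h-\Delta\Tr h$, its principal symbol acting on a symmetric $2$-tensor $h$ at $\xi\in\R^n\setminus\{0\}$ is
\[
\sigma_\xi(L)\,h=\tfrac12|\xi|^2 h+\tfrac12(\Tr h)\,\xi\otimes\xi-\tfrac12\big(\xi\otimes h\xi+h\xi\otimes\xi\big)+\kappa\big(|\xi|^2\Tr h-h(\xi,\xi)\big)\,\delta,
\]
where $(h\xi)_j=\xi^a h_{aj}$. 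This operator is \emph{not} elliptic: a direct check gives $\sigma_\xi(L)(\xi\otimes a+a\otimes\xi)=0$ for every covector $a$, so for each $\xi\neq0$ the principal symbol annihilates the whole $n$-dimensional space of longitudinal tensors — the infinitesimal shadow of the diffeomorphism invariance $\varphi^{*}\Ein(g)=\Ein(\varphi^{*}g)$.

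The crux is then to restore the zeroth-order term and show that the \emph{full} symbol $S(\xi):=\sigma_\xi(L)+\Lambda\,\Id$ is invertible for \emph{every} $\xi\in\R^n$. For fixed $\xi$ I split $\operatorname{Sym}^2$ into the longitudinal tensors $\xi\otimes a+a\otimes\xi$ and the tensors carried by $\xi^{\perp}$. On the longitudinal block $\sigma_\xi(L)$ vanishes, so $S(\xi)=\Lambda\,\Id$ there, invertible precisely because $\Lambda>0$; equivalently, $\Ein(\delta)=\Lambda\delta\ne0$ turns the diffeomorphism directions into eigendirections of $L$ of eigenvalue $\Lambda$ instead of a kernel. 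On the transverse tensors a short computation shows that $S(\xi)$ is invertible iff $\tfrac12|\xi|^2\big(1+2\kappa(n-1)\big)+\Lambda\neq0$ for all $\xi$; since $\Lambda>0$ this holds exactly when $1+2\kappa(n-1)\ge0$, and the strict hypothesis $\kappa>-\tfrac1{2(n-1)}$ moreover keeps the transverse block genuinely elliptic of order two (its pure-trace principal eigenvalue $\tfrac12+\kappa(n-1)$ stays positive, while the trace-free eigenvalue is $\tfrac12$). If instead $\kappa<-\tfrac1{2(n-1)}$ the expression vanishes at $|\xi|^2=2\Lambda/\!\left|1+2\kappa(n-1)\right|$ and invertibility is lost, so both hypotheses are sharp for this method.

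With $S(\xi)$ invertible on all of $\R^n$, $L$ is a mixed-order (Douglis--Nirenberg) operator: order two along transverse tensors and order zero along longitudinal ones. I would convert this pointwise invertibility into an isomorphism of $L$ from $H^{s+2,t}$ onto the adapted target space (longitudinal part in $H^{s+2,t}$, transverse part in $H^{s,t}$), into which the given datum $e\in H^{s+2,t}$ embeds. The order-zero behaviour along the longitudinal directions is exactly why source and target carry the \emph{same} index $s+2$: a purely second-order elliptic operator would gain two derivatives and pair $H^{s,t}$ with $H^{s+2,t}$, whereas here the inverse multiplier $S(\xi)^{-1}$ stays bounded, tending to $\Lambda^{-1}$ on longitudinal and to $0$ on transverse directions. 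As the coefficients are constant at $\delta$ this is a Fourier-multiplier statement, and the weight $\langle x\rangle^{t}$ with $t\ge0$ contributes only lower-order commutators absorbed by the coercive term $\Lambda\,\Id$, so the isomorphism survives on the weighted spaces. The inverse function theorem then produces, for small $e\in H^{s+2,t}$, a unique small $h\in H^{s+2,t}$ with $\Ein(\delta+h)=\Ein(\delta)+e$, depending smoothly on $e$. The main obstacle is precisely this last functional-analytic step: turning the everywhere-invertible but non-elliptic symbol into a genuine isomorphism on the weighted Sobolev spaces, keeping the longitudinal (order $0$) and transverse (order $2$) blocks and the weight under simultaneous control, which is where the positivity in both hypotheses is indispensable.
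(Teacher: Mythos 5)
Your symbol analysis is essentially correct, and it is a nice way of seeing where both hypotheses enter: the principal symbol of $L=D\Ein_\delta$ annihilates the longitudinal tensors $\xi\otimes a+a\otimes\xi$, the full constant-coefficient symbol $S(\xi)$ is block-triangular for the longitudinal/transverse splitting, equals $\Lambda\Id$ on the longitudinal block, and is invertible on the transverse block precisely because $\bigl(\tfrac12+\kappa(n-1)\bigr)|\xi|^2+\Lambda>0$ for all $\xi$. The gap is in the step you yourself flag as ``the main obstacle'': converting pointwise invertibility of this \emph{non-elliptic} symbol into an inverse function theorem argument is not a routine functional-analytic step, and as proposed it does not close, for two reasons. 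First, your adapted (Douglis--Nirenberg) target space must be defined through the Fourier projections onto the longitudinal and transverse subspaces; these are homogeneous of degree $0$ in $\xi$, hence not continuous at $\xi=0$, and multiplication by such symbols is \emph{not} bounded on $H^{t,s}$ (the Fourier-transformed picture of $H^{s,t}$) once $t>\frac n2$ — yet the theorem allows arbitrary $t\geq 0$. Second, and decisively: the inverse function theorem requires the \emph{nonlinear} map $h\mapsto \Ein(\delta+h)-\Lambda\delta$ to take values smoothly in that adapted space, i.e.\ the longitudinal Fourier part of $\Ein(\delta+h)-\Lambda\delta$ must lie in $H^{s+2,t}$, two derivatives better than the naive $H^{s,t}$. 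Your computation establishes this only for the linear part. For the quadratic and higher terms there is no reason it holds: the gauge structure of the full operator is the covariant Bianchi identity $\mathcal B_{\delta+h}\bigl(\Ein(\delta+h)\bigr)=0$, attached to the unknown metric $\delta+h$, not to the fixed flat splitting, so the nonlinear terms generically have longitudinal components that are only in $H^{s,t}$. Without that mapping property the IFT simply cannot be invoked.

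The paper circumvents exactly this point by DeTurck's gauge-breaking instead of adapted spaces. One solves the modified equation $\mathcal F(h,e)=0$ with
$$
\mathcal F(h,e)=\Ric(\delta+h)-E+\frac{\kappa\Tr_{\delta+h}E+\Lambda}{1+\kappa n}(\delta+h)-\frac1\Lambda\mathcal L_{\delta}\mathcal B_{\delta+h}(E),\qquad E=\Lambda\delta+e ,
$$
whose linearization at $(0,0)$ is genuinely elliptic and, by Proposition \ref{propinv}, an isomorphism $H^{s+2,t}\to H^{s,t}$ exactly under $\Lambda>0$, $\kappa>-\frac1{2(n-1)}$ (the same constants your transverse block produced, now appearing on the conformal/trace-free splitting); the implicit function theorem then applies between honest weighted Sobolev spaces (Proposition \ref{propsolF}). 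Finally the gauge is removed a posteriori: applying $B_{\delta+h}$ and using the Bianchi identity, the gauge $1$-form $\omega=\frac1\Lambda\mathcal B_{\delta+h}(E)$ satisfies $P_{\delta+h}\omega=0$ with $P_{\delta+h}=B_{\delta+h}\mathcal L_\delta+\Lambda$ a small perturbation of the isomorphism $\frac12(\Delta+2\Lambda)$, hence $\omega=0$ and $\Ein(\delta+h)=E$ (Proposition \ref{proposolEin}). If you want to rescue your route, you would need either this kind of gauge-fixing or a Nash--Moser scheme exploiting the Bianchi identity à la Hamilton; the adapted-space shortcut, as stated, is where the proof breaks.
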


Ce résultat permet de traiter un problème similaire à la tentative avortée  de \cite{Jeune1989} pour prescrire la courbure de Ricci,
essentiellement grâce à l'ajout d'une constante cosmologique $\Lambda$ dans l'équation, nous y revenons en section \ref{sec:commentaires}.

Pour un opérateur d'ordre deux, il peut \^etre surprenant de voir que la régularité
de notre solution n'a pas deux points de plus que la donnée. On peut se convaincre
que la régularité est optimale en transposant l'équation par un difféomorphisme peu régulier.

Cette inversion nous permet ensuite, en section \ref{sec:ssvar}
de prouver que l'image de certains opérateurs de type
Riemann-Christoffel sont des sous variétés dans des espaces de Fréchet 
à poids.\\

{\small\sc Remerciements}. {\small Je remercie Philippe Delano\"e de m'avoir signalé en 1994 l'erreur   malheureuse de \cite{Jeune1989} 
qui a fini par motiver ce travail quelques années après...
 Ce projet  est en partie financé par les ANR  SIMI-1-003-01 et ANR-10-BLAN 0105   .

}

\section{Définitions, notations et conventions}\label{sec:def}

Pour une métrique riemannienne $g$, nous noterons  $\nabla$ sa connexion  de Levi-Civita , par $\Ric(g)$   sa courbure de Ricci et par
$\Riem(g)$ sa courbure de  Riemann sectionnelle. 

Soit ${\mathcal T}_p^q$ l'ensemble des tenseurs covariants de rang $p$ et contravariants de rang $q$.
Lorsque $p=2$ et $q=0$, on notera ${\mathcal S}_2$ le sous-ensemble des tenseurs symétriques,
qui se décompose en ${\mathcal G}\oplus {\mathring{\mathcal
S}_2}$ o\`u ${\mathcal G}$ est l'ensemble des  tenseurs  $\delta$-conformes et 
${\mathring{\mathcal S}_2}$ l'ensemble des tenseurs sans trace (relativement à $\delta$). On utilisera la convention de sommation  d'Einstein
 (les indices correspondants vont de $1$ à $n$), et nous utiliserons 
 $g_{ij}$ et son inverse $g^{ij}$ pour monter ou descendre les indices.

Le Laplacien (brut) est défini par
$$
\triangle=-tr\nabla^2=\nabla^*\nabla,
$$
o\`u $\nabla^*$ est l'adjoint formel $L^2$ de $\nabla$. 
Pour  $u$ un 2-tenseur covariant symétrique, on définit sa divergence par
 $$ (\mbox{div}u)_i=-\nabla^ju_{ji}.$$ Pour une 1-forme
$\omega$ on $M$, on définit sa divergence par :
$$
d^*\omega=-\nabla^i\omega_i,
$$
et la partie symétrique  de ses dérivées covariantes:
$$
({\mathcal
L}\omega)_{ij}=\frac{1}{2}(\nabla_i\omega_j+\nabla_j\omega_i),$$
(notons que ${\mathcal L}^*=\mbox{div}$).

On définit l'opérateur de Bianchi des 2-tenseurs symétriques dans les 1-formes :
$$
B_g(h)=\div_gh+\frac{1}{2}d(\Tr_gh).
$$

\section{Espaces à poids et isomorphismes}\label{sec:Hst}
Les espaces à poids que nous utiliserons ici ne sont pas les espaces classiques utilisés dans le contexte
asymptotiquement euclidien de la relativité générale (voir \cite{Bartnikmass}  ou \cite{ChoquetBruhatChristodoulou1981} par exemple ).
Ils sont plut\^ot  utilisés en théorie du scattering comme dans les travaux de S. Agmon \cite{Agmon1975}
ou de R. Melrose  \cite{Melrose1994} (voir aussi son cours \cite{MelroseLec2008} section 6, ou \cite{Schrohe92} p241). 
Pour $x\in\R^n$ , on pose
$$
\langle x\rangle =(1+|x|^2)^{\frac12}.
$$
Pour $s\in\R$, on rappelle  l'espace de Sobolev classique 
$$
H^s(\R^n)=\{u\in\mathcal S'(\R^n)\;,\;\;\langle \xi\rangle^s \hat u\in L^2(\R^n)\},
$$
o\`u $S'(\R^n)$ est le dual topologique de l'espace de Schwartz, et $\hat u$ la transformée de Fourier de $u$. On 
munit $H^s(\R^n)$ de la norme 
$$
\|u\|_s:=\|\langle \xi\rangle^s \hat u\|_{L^2}.
$$
Afin de décrire plus précisément le comportement asymptotique des (champs de) tenseurs
qui nous  intéressent, introduisons les espaces à poids (voir \cite{Agmon1975} ou \cite{MelroseLec2008} par exemple)
$$
H^{s,t}(\R^n)=\{u\in\mathcal S'(\R^n)\;,\;\;\langle x\rangle^t  u\in H^s(\R^n)\}=\langle x\rangle^{-t}H^s(\R^n).
$$
munis de la norme
$$
\|u\|_{s,t}:=\|\langle x\rangle^t u\|_{s}.
$$
Ces espaces ont beaucoup de bonnes propriétés comme :
$\forall s,t\in\R,$ les applications suivantes sont continues
$$
\forall s'\leq s,\; \;\forall t'\leq t\;\;\;H^{s,t}(\R^n)\hookrightarrow H^{s',t'}(\R^n),
$$
$$
\partial_{x_j}:H^{s,t}(\R^n)\longrightarrow H^{s-1,t}(\R^n),
$$
$$
\times x_j: H^{s,t}(\R^n)\longrightarrow H^{s,t-1}(\R^n).
$$
La transformée de Fourier donne un isomorphisme :
$$
\begin{array}{ccc}
H^{s,t}(\R^n)&\widetilde\longrightarrow& H^{t,s}(\R^n)\\
u&\mapsto&\hat u.
\end{array}
$$
Le dual de $H^{s,t}(\R^n)$ s'identifie à $H^{-s,-t}(\R^n)$.
L'injection de Sobolev classique nous donne aussi immédiatement pour $k\in\N$ 
$$
s>\frac n2+k\;\Rightarrow \;\;H^{s,t}(\R^n)\subset \langle x\rangle^{-t}C^k_{\rightarrow 0}(\R^n),
$$
o\`u $C^k_{\rightarrow 0}(\R^n)$ est l'ensemble des fonctions $C^k$ sur $\R^n$ dont les dérivées d'ordre $\leq k$
tendent vers zéro à l'infini.

\begin{proposition}\label{propinv} Pour $s,t\in\R$ et $C>0$ une constante, l'opérateur
$$
\Delta+C:H^{s+2,t}(\R^n)\widetilde\longrightarrow H^{s,t}(\R^n),
$$
est un isomorphisme.
\end{proposition}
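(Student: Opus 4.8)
The plan is to diagonalize $\Delta+C$ by the Fourier transform and reduce the claim to a statement about multiplication by a strictly positive, slowly varying symbol. On flat $\R^n$ the rough Laplacian $\Delta=-\Tr\nabla^2=\nabla^*\nabla$ acting on a function is $-\sum_j\partial_{x_j}^2$, so with the convention $\widehat{\partial_{x_j}u}=i\xi_j\hat u$ one gets
$$
\widehat{(\Delta+C)u}(\xi)=m(\xi)\,\hat u(\xi),\qquad m(\xi):=|\xi|^2+C .
$$
Since the excerpt provides that $u\mapsto\hat u$ is an isomorphism $H^{s,t}(\R^n)\to H^{t,s}(\R^n)$, the operator $\Delta+C:H^{s+2,t}\to H^{s,t}$ is an isomorphism if and only if multiplication by $m$ is an isomorphism $M_m:H^{t,s+2}(\R^n)\to H^{t,s}(\R^n)$, all spaces being read now in the $\xi$ variable. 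This is precisely the advantage of these scattering-type weighted spaces over the usual asymptotically Euclidean ones: the Fourier transform exchanges regularity and decay, so a constant-coefficient second-order operator becomes, after transform, multiplication by a function of $\xi$ that gains exactly two powers of $\langle\xi\rangle$.

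Next I would strip off the weight. By definition $H^{t,s}(\R^n)=\langle\xi\rangle^{-s}H^t(\R^n)$, so conjugating $M_m$ by the weight yields the reduction
$$
\langle\xi\rangle^{s}\,M_m\,\langle\xi\rangle^{-(s+2)}=M_a,\qquad a(\xi):=\frac{m(\xi)}{\langle\xi\rangle^2}=\frac{|\xi|^2+C}{1+|\xi|^2},
$$
now viewed as a multiplication operator on the fixed space $H^t(\R^n)$. Thus $M_m$ is an isomorphism $H^{t,s+2}\to H^{t,s}$ exactly when $M_a$ is an isomorphism of $H^t(\R^n)$. The function $a$ is smooth, and one checks immediately that both $a$ and $1/a=(1+|\xi|^2)/(|\xi|^2+C)$ lie in $C_b^\infty(\R^n)$, i.e.\ are bounded together with all their derivatives; here the hypothesis $C>0$ is exactly what guarantees $m\geq C>0$, so that $1/m$, and hence $1/a$, is smooth with bounded derivatives and no singularity at the origin.

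It then remains only to prove that multiplication by a function in $C_b^\infty(\R^n)$ is bounded on $H^t(\R^n)$ for every real $t$; applying this to $a$ and to $1/a$ shows that $M_a$ and $M_{1/a}$ are mutually inverse bounded operators, which closes the argument. This boundedness is the single genuinely analytic point, and it is where I expect the (mild) technical work to sit: for an integer $t\geq 0$ it is immediate from the Leibniz rule together with the uniform bounds on the derivatives of the multiplier; the case $t<0$ follows by duality, the multiplier being real-valued so that multiplication is formally self-adjoint on $L^2$ and $(H^t)'=H^{-t}$; and non-integer $t$ is obtained by interpolation (equivalently, one notes that multiplication by a $C_b^\infty$ function is a pseudodifferential operator of order $0$, hence bounded on every $H^t$). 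Finally, since on flat $\R^n$ the rough Laplacian acts diagonally on the components of a tensor field in the standard frame, the same argument applied componentwise gives the statement for $H^{s,t}(\R^n,\mathcal S_2)$, which is the form in which the proposition will be used.
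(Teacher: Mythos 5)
Your proof is correct and follows exactly the paper's route: the paper also passes to the Fourier side and reduces the claim to the fact that multiplication by $|\xi|^2+C$ is an isomorphism $H^{t,s+2}(\R^n)\to H^{t,s}(\R^n)$, which it simply asserts. The only difference is that you supply the verification the paper omits (stripping the weight $\langle\xi\rangle^{s}$ to reduce to the $C_b^\infty$ multiplier $(|\xi|^2+C)/(1+|\xi|^2)$ and its inverse acting boundedly on $H^t$), which is a sound and standard completion of the same argument.
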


\begin{proof}
Par transformée de Fourier, il suffit de remarquer que 
$$
\times(|\xi|^2+C):H^{t,s+2}(\R^n)\longrightarrow H^{t,s}(\R^n)
$$
est un isomorphisme.
\end{proof}
Nous aurons aussi besoin du
\begin{lemma}\label{lemAlgebre}
Soient $s>\frac n2$ et $t\geq0$ alors
$$
u,v\in H^{s,t}\Rightarrow uv\in H^{s,t}.
$$
De plus il existe une constante $C_{s,t}$ telle que
$$
\|uv\|_{s,t}\leq C_{s,t} \|u\|_{s,t} \|v\|_{s,t}.
$$
\end{lemma}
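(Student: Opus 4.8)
The plan is to reduce the weighted estimate to the classical fact that, for $s>\frac n2$, the Sobolev space $H^s(\R^n)$ is a Banach algebra under pointwise multiplication, i.e. $\|fg\|_s\leq C_s\|f\|_s\|g\|_s$ for all $f,g\in H^s$. This unweighted estimate (which is exactly the case $t=0$ of the lemma) is itself established on the Fourier side: writing $\widehat{fg}=\hat f * \hat g$ and combining Peetre's inequality $\langle\xi\rangle^s\leq C_s(\langle\xi-\eta\rangle^s+\langle\eta\rangle^s)$ with the Cauchy--Schwarz inequality and the fact that $\langle\cdot\rangle^{-s}\in L^2(\R^n)$ precisely when $s>\frac n2$. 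I would either invoke this as standard or recall the short convolution argument.

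The key observation that takes care of the weight is the elementary pointwise identity $\langle x\rangle^t(uv)=(\langle x\rangle^t u)\,v$. By the very definition of the norm, $f:=\langle x\rangle^t u$ lies in $H^s$ with $\|f\|_s=\|u\|_{s,t}$. Since $t\geq 0$, the continuous embedding $H^{s,t}(\R^n)\hookrightarrow H^{s,0}(\R^n)=H^s(\R^n)$ already recorded in this section gives $v\in H^s$ together with a bound $\|v\|_s\leq C\,\|v\|_{s,t}$. Applying the algebra property of $H^s$ to the pair $f,v$ then yields
$$
\|uv\|_{s,t}=\|\langle x\rangle^t uv\|_s=\|(\langle x\rangle^t u)\,v\|_s\leq C_s\,\|\langle x\rangle^t u\|_s\,\|v\|_s\leq C_sC\,\|u\|_{s,t}\,\|v\|_{s,t},
$$
which is the asserted bound with $C_{s,t}=C_sC$, and in particular shows $uv\in H^{s,t}$.

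The single point deserving attention is the hypothesis $t\geq 0$: it is exactly what guarantees $v\in H^s$, or equivalently that the surplus weight $\langle x\rangle^{-t}$ is bounded so that one factor may be stripped of its weight while the other absorbs the full $\langle x\rangle^t$. A fully symmetric variant would instead split the weight as $\langle x\rangle^t uv=(\langle x\rangle^{t/2}u)(\langle x\rangle^{t/2}v)$ and use $H^{s,t}\hookrightarrow H^{s,t/2}$; it requires the same sign condition on $t$. I do not anticipate any genuine obstacle: once the classical unweighted algebra estimate is available, the weighted statement follows solely from the redistribution of the weight $\langle x\rangle^t$ and the monotonicity embedding valid for $t\geq 0$.
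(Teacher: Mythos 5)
Your proof is correct and takes essentially the same route as the paper's: both reduce the weighted estimate to the classical algebra property of $H^s$ for $s>\frac n2$ together with the weight-monotonicity embedding valid for $t\geq 0$. The only cosmetic difference is how the weight is split: the paper applies the unweighted estimate to the pair $(\langle x\rangle^t u,\langle x\rangle^t v)$, obtaining $uv\in H^{s,2t}$ and then embedding $H^{s,2t}\subset H^{s,t}$, whereas you put the whole weight on $u$ and first embed $v$ into $H^s$.
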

\begin{proof}
Le résultat est connu dans le cas $t=0$ pour une constante $C_s$.
On a ainsi
$$
\|\langle x\rangle^{2t}uv\|_{s}\leq C_{s} \|\langle x\rangle^{t}u\|_{s} \|\langle x\rangle^{t}v\|_{s}.
$$
Ce qui se traduit par  
$$
\|uv\|_{s,2t}\leq C_{s} \|u\|_{s,t} \|v\|_{s,t}.
$$
Il suffit ensuite de rappeler que si $t\geq0$, on a une inclusion continue $H^{s,2t}\subset H^{s,t}$.
\end{proof}

\begin{remark}\label{remespaces}
Les espaces à poids que nous utilisons ici sont,  en un certain sens, plus gros que ceux utilisés habituellement 
dans le contexte asymptotiquement euclidien de la relativité générale comme dans \cite{ChoquetBruhatChristodoulou1981} ou \cite{Bartnikmass}.
En effet nous ne demandons pas ici aux dérivées de décroître plus vite (ou croître moins vite) que la fonction à l'infini.
\end{remark}

\section{Preuve du théorème principal}
Il est maintenant bien connu que l'équation que nous voulons résoudre (\ref{mainequation}) n'est pas elliptique dû à l'invariance
de la courbure par difféomorphisme. Nous allons modifier cette  équation en s'inspirant
de la m\'ethode de DeTurck. On  y ajoute donc un terme jauge de telle sorte
que le cette nouvelle équation devienne elliptique, tout en faisant en sorte
que ses solutions soient encore solutions de l'équation de départ.

Tout d'abord comme 
$$
\Tr_g\Ein(g)=(1+n\kappa)R(g)+n\Lambda,
$$
l'équation (\ref{mainequation}) est équivalente à 
$$
\Ric(g)=E-\frac{\kappa\Tr_g  E+\Lambda}{1+n\kappa}g.
$$ 
Pour toute métrique $g$, $B_{g}(\Ric(g))=0$
par l'identité de Bianchi. Nous définissons donc 
$$
\mathcal B_g(E)=\div_gE+\frac{2\kappa+1}{2(1+\kappa n)}d\Tr_gE=B_g(E)-\frac{(n-2)\kappa}{2(1+\kappa n)}d\Tr_gE,
$$
de sorte que l'identité de Bianchi se traduise ici par
$$
\mathcal B_g(Ein(g))=0.
$$

Afin de construire notre nouvelle équation, rappelons quelques différentielles d'opérateurs.
On a d'une part (voir \cite{Besse} par exemple)
$$
D\Ric(\delta)h=\frac12\Delta h-\mathcal L_{\delta}B_{\delta}(h).
$$
D'autre part, compte tenu de la différentielle de $B_g(E)$ relativement à la métrique (voir \cite{Delay:study} par exemple),
on trouve
$$
D[\mathcal B_{(.)}(E)](\delta)h=-EB_{\delta}(h)+\frac{(n-2)\kappa}{2(1+\kappa n)}d\langle E,h\rangle
+T(E,h),$$
o\`u $E$ est identifié à l'endomorphisme de $T^*M$ correspondant et
$$
T(E,h)_j=\frac12(\partial_kE_{jl}+\partial_lE_{kj}-\partial_jE_{kl})h^{kl},
$$
en particulier $T(E,h)=0$ si $E$ est proportionnel à $\delta$.
On définit, pour l'instant formellement, pour $\kappa\neq -1/n$, $\Lambda\neq0$, $h$ et $e$ voisins de zéro dans $H^{s+2,t}(\R^n,\mathcal S_2)$:
$$
\mathcal F(h,e):=\Ric(\delta+h)-E+\frac{\kappa\Tr_{\delta+h}E+\Lambda}{1+\kappa n}{(\delta+h)}-\frac1\Lambda\mathcal L_{\delta}\mathcal B_{\delta+h}(E),
$$
o\`u $E=\Ein(\delta)+e=\Lambda\delta+e$.
\begin{proposition}\label{Flisse}
Pour $\kappa\neq -1/n$, $\Lambda\neq0$, $s>\frac n2$ et $t\geq 0$ l'application $$
\mathcal F: H^{s+2,t}(\R^n,\mathcal S_2)\times H^{s+2,t}(\R^n,\mathcal S_2)\longrightarrow H^{s,t}(\R^n,\mathcal S_2),
$$
est bien définie et lisse au voisinage de zéro.
\end{proposition}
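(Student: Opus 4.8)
The goal is to check that $\mathcal F$ maps into $H^{s,t}$ and is smooth ($C^\infty$ in the Fréchet sense) near the origin. The strategy is to decompose $\mathcal F$ into the individual algebraic and differential building blocks and verify that each lands in $H^{s,t}$ and depends smoothly on $(h,e)$, using the mapping properties listed in Section~\ref{sec:Hst} together with the algebra Lemma~\ref{lemAlgebre}. The key structural observation is that $\Ric(\delta+h)$, and likewise the other curvature-type terms, are built from the components of $h$ and $e$ by three operations only: taking (at most two) partial derivatives, multiplying finitely many factors together, and inverting the metric $\delta+h$ to raise indices. The plan is to show that each of these three operations behaves well on the weighted spaces, and then assemble.

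Let me spell out the three points. First, differentiation: by Section~\ref{sec:Hst}, $\partial_{x_j}\colon H^{s+2,t}\to H^{s+1,t}$ is continuous and linear, hence smooth, and two derivatives land in $H^{s,t}$; since $s>\tfrac n2$ the factors carrying one derivative still lie in the algebra $H^{s+1,t}\subset H^{s,t}$. Second, products: Lemma~\ref{lemAlgebre} gives that multiplication is a continuous bilinear map $H^{s,t}\times H^{s,t}\to H^{s,t}$ (and the same for the higher-regularity factors by the embedding $H^{s+1,t}\hookrightarrow H^{s,t}$), so any fixed multilinear monomial in the entries of $h$, $e$ and their derivatives is a continuous multilinear map into $H^{s,t}$, and a continuous multilinear map is automatically smooth. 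Third — and this is the step I expect to be the main obstacle — the inverse metric $g^{ij}=(\delta+h)^{ij}$ must be controlled. Since $s>\tfrac n2$ the Sobolev embedding $H^{s+2,t}\subset\langle x\rangle^{-t}C^0_{\to0}$ forces $h$ to be small in sup norm when it is small in $H^{s+2,t}$, so $\delta+h$ is uniformly invertible for $h$ near zero, and one writes the inverse as a Neumann series $g^{ij}=\delta^{ij}-h^{ij}+\sum_{k\ge2}(-1)^k(h^k)^{ij}$. The content is that $h\mapsto g^{-1}-\delta$ is a smooth map $H^{s+2,t}\to H^{s+2,t}$: this follows because matrix inversion is a smooth (indeed analytic) map on the open set of invertible matrices, and composition with the algebra structure of $H^{s+2,t}$ (again Lemma~\ref{lemAlgebre}, which needs $t\ge0$ so that products stay in the same weighted space rather than drifting to weight $2t$) preserves smoothness; the Neumann series converges in the $H^{s+2,t}$-norm by the algebra estimate once $\|h\|_{s+2,t}$ is small.

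With these three facts in hand, the assembly is routine. The term $\Ric(\delta+h)$ is a finite sum of monomials of the schematic form $g^{-1}\ast g^{-1}\ast\partial^2 h$ and $g^{-1}\ast g^{-1}\ast g^{-1}\ast\partial h\ast\partial h$; each factor lies in $H^{s,t}$ (the second-derivative factor exactly so, the inverse-metric factors in the better space $H^{s+2,t}$), so by the algebra lemma the product is in $H^{s,t}$ and, being a composition of smooth maps, is smooth in $h$. The traces $\Tr_{\delta+h}E=g^{ij}E_{ij}$ and the Bianchi-type term $\mathcal B_{\delta+h}(E)=\div_{\delta+h}E+\tfrac{2\kappa+1}{2(1+\kappa n)}d\Tr_{\delta+h}E$ are handled identically — they involve $g^{-1}$, one derivative of $E=\Lambda\delta+e$, and Christoffel symbols $\Gamma\sim g^{-1}\ast\partial h$, all of which we have already shown to be smooth into the appropriate weighted space. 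The outer operator $\mathcal L_\delta$ is the constant-coefficient first-order operator $\tfrac12(\partial_i\omega_j+\partial_j\omega_i)$, continuous (hence smooth) from $H^{s+1,t}$ to $H^{s,t}$, so $\tfrac1\Lambda\mathcal L_\delta\mathcal B_{\delta+h}(E)\in H^{s,t}$; the hypotheses $\kappa\neq-1/n$ and $\Lambda\neq0$ guarantee the scalar coefficients $\tfrac{\kappa(\cdots)+\Lambda}{1+\kappa n}$ and $\tfrac1\Lambda$ are well defined. Summing finitely many smooth $H^{s,t}$-valued maps yields that $\mathcal F$ is well defined and smooth on a neighbourhood of the origin, as claimed.
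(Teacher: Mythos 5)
Your strategy is in substance the one the paper follows in its appendix: write each term of $\mathcal F$ schematically as monomials in $\partial h$, $\partial^2h$, $\partial e$ and the inverse metric, control $g^{-1}=(\delta+h)^{-1}$ by a Neumann series converging in the Banach algebra $H^{s+2,t}$ (Lemma \ref{lemAlgebre}), and get smoothness for free because continuous multilinear maps and convergent power series in a Banach algebra are smooth. For $\Ric(\delta+h)$ and for $\frac1\Lambda\mathcal L_\delta\mathcal B_{\delta+h}(E)$ your argument is correct: after expanding $g^{-1}=\delta^{-1}+\tilde h$ with $\tilde h\in H^{s+2,t}$, every monomial in these terms carries at least one factor $\partial h$, $\partial^2h$ or $\partial e$ lying in a weighted space, so the algebra lemma applies.

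There is, however, a genuine gap in your treatment of the zero-order part, and it concerns precisely the one non-routine point of the well-definedness. You assemble $\mathcal F$ as a sum of ``finitely many smooth $H^{s,t}$-valued maps'', counting among them the trace term $\frac{\kappa\Tr_{\delta+h}E+\Lambda}{1+\kappa n}(\delta+h)$ and, implicitly, $-E$. But neither of these is $H^{s,t}$-valued: since $E=\Lambda\delta+e$ with $\Lambda\neq0$, one has $\Tr_{\delta+h}E=\Lambda n+(\text{terms in }H^{s+2,t})$, so the trace term equals $\Lambda\delta$ plus decaying terms, and $-E=-\Lambda\delta-e$; nonzero constant tensors do not belong to $H^{s,t}$ for $s>\frac n2$, $t\geq0$ (the function $\langle x\rangle^t u$ would have to lie in $L^2$). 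The same conflation of ``constant plus weighted space'' with ``weighted space'' appears when you assert that the inverse-metric factors lie ``in the better space $H^{s+2,t}$'': only $g^{-1}-\delta^{-1}$ does. This abuse is harmless in the Ricci and Bianchi terms, where every monomial contains a derivative factor, but it is fatal for the zero-order term if left unexamined. What makes $\mathcal F$ well defined is a cancellation: setting $Z=\frac{\kappa\Tr_{\delta+h}E+\Lambda}{1+\kappa n}(\delta+h)-E$ and expanding, the constant contribution is $\frac{\Lambda(1+\kappa n)}{1+\kappa n}\delta-\Lambda\delta=0$, and every surviving monomial carries a factor $h$, $\tilde h$ or $e$, hence lies in $H^{s,t}$. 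This is exactly the step the paper singles out (``en développant, on remarque que le terme constant est nul''). With this one check added, your proof closes and coincides with the paper's; without it, the final summation step does not go through.
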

\begin{proof}
La preuve de cette proposition est renvoyée en appendice,
elle utilise essentiellement le fait  que sous ces hypothèses,
l'espace $H^{s,t}$ est une algèbre.
\end{proof}

\begin{proposition}\label{propsolF}
Soient $s>\frac n2$, $t\geq 0$, $\Lambda>0$ et $k>-\frac1{2(n-1)}$. Pour tout $e$ assez petit dans $H^{s+2,t}(\R^n,\mathcal S_2)$,
il existe $h$ dans $H^{s+2,t}(\R^n,\mathcal S_2)$ tel que $\mathcal F(h,e)=0$,
de plus l'application $e\mapsto h$ est lisse entre les espaces de Hilbert
correspondants.
\end{proposition}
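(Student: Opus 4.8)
Le plan est d'appliquer le théorème des fonctions implicites à $\mathcal F$ au point $(0,0)$. On vérifie d'abord que $\mathcal F(0,0)=0$ : comme $\delta$ est plate, $\Ric(\delta)=0$, et avec $E=\Lambda\delta$ on a $\Tr_\delta E=n\Lambda$, de sorte que $\frac{\kappa\Tr_\delta E+\Lambda}{1+\kappa n}\delta=\Lambda\delta$ compense le terme $-E$ ; enfin $\mathcal B_\delta(\Lambda\delta)=0$ (divergence et différentielle de trace d'un multiple constant de $\delta$), donc le dernier terme s'annule aussi. La régularité $C^\infty$ de $\mathcal F$ au voisinage de $(0,0)$ est fournie par la Proposition \ref{Flisse}. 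Il reste l'étape cruciale : montrer que la différentielle partielle $P:=D_h\mathcal F(0,0)$ est un isomorphisme de $H^{s+2,t}$ sur $H^{s,t}$.

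Je calcule d'abord $P$ en dérivant chaque terme en $h$, à $h=0$, $e=0$. En utilisant $D\Ric(\delta)h=\frac12\Delta h-\mathcal L_\delta B_\delta(h)$ et la formule donnée pour $D[\mathcal B_{(.)}(E)](\delta)h$ avec $E=\Lambda\delta$ (pour lequel $T(E,h)=0$), le terme de jauge à la DeTurck produit exactement $+\mathcal L_\delta B_\delta(h)$, qui compense celui issu de $\Ric$. Après simplification,
\[
Ph=\tfrac12\Delta h+\Lambda h-\frac{\kappa\Lambda}{1+\kappa n}(\Tr_\delta h)\delta-\frac{(n-2)\kappa}{2(1+\kappa n)}\mathcal L_\delta\, d\,\Tr_\delta h,
\]
où $\mathcal L_\delta\, d\,\Tr_\delta h=\Hess(\Tr_\delta h)$ est d'ordre deux. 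C'est précisément la présence de ce terme hessien, couplant trace et partie sans trace, qui empêche de conclure que $P$ est une simple perturbation de $\tfrac12\Delta+\Lambda$, et c'est ici qu'interviendront les hypothèses $\Lambda>0$ et $\kappa>-\frac1{2(n-1)}$.

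Pour inverser $P$, je procède de façon triangulaire suivant la décomposition $\mathcal S_2=\mathcal G\oplus\mathring{\mathcal S}_2$. Soit $f\in H^{s,t}(\R^n,\mathcal S_2)$ donné. En prenant la trace de $Ph=f$ et en utilisant $\Tr_\delta\Hess u=-\Delta u$, on obtient l'équation scalaire $a\,\Delta u+\frac{\Lambda}{1+\kappa n}u=\Tr_\delta f$ pour $u:=\Tr_\delta h$, avec $a=\frac{1+2\kappa(n-1)}{2(1+\kappa n)}$. Comme $\kappa>-\frac1{2(n-1)}$ entraîne $1+\kappa n>0$ et $1+2\kappa(n-1)>0$, on a $a>0$ et l'équation se réécrit $(\Delta+C)u=a^{-1}\Tr_\delta f$ avec $C=\frac{2\Lambda}{1+2\kappa(n-1)}>0$ ; la Proposition \ref{propinv} fournit un unique $u\in H^{s+2,t}$. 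On réinjecte alors $u$ : l'équation $Ph=f$ devient $(\tfrac12\Delta+\Lambda)h=f+\frac{\kappa\Lambda}{1+\kappa n}u\,\delta+\frac{(n-2)\kappa}{2(1+\kappa n)}\Hess u$, dont le second membre est dans $H^{s,t}$, et comme $\tfrac12\Delta+\Lambda=\tfrac12(\Delta+2\Lambda)$ avec $2\Lambda>0$, la Proposition \ref{propinv} donne un unique $h\in H^{s+2,t}$. Il reste la compatibilité : en reprenant la trace de cette dernière équation et en utilisant l'équation scalaire satisfaite par $u$, on retrouve $(\tfrac12\Delta+\Lambda)\Tr_\delta h=(\tfrac12\Delta+\Lambda)u$, d'où $\Tr_\delta h=u$ par injectivité, et donc bien $Ph=f$. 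Ceci établit que $P$ est un isomorphisme.

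Enfin, $\mathcal F$ étant lisse avec $\mathcal F(0,0)=0$ et $D_h\mathcal F(0,0)=P$ inversible, le théorème des fonctions implicites entre espaces de Hilbert fournit un voisinage de $0$ dans $H^{s+2,t}$ et une application lisse $e\mapsto h(e)$ avec $h(0)=0$ et $\mathcal F(h(e),e)=0$, ce qui conclut. Le point le plus délicat est l'inversibilité de $P$ : il faut à la fois dénouer le couplage hessien par la structure triangulaire trace / partie sans trace, et vérifier que la condition de signe $\kappa>-\frac1{2(n-1)}$ (jointe à $\Lambda>0$) rend positive la constante $C$ qui rend la Proposition \ref{propinv} applicable.
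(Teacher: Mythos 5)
Votre preuve est correcte et suit essentiellement la même route que l'article : même calcul de $D_h\mathcal F(0,0)$, même exploitation de la structure triangulaire relative à la décomposition trace / partie sans trace, avec la Proposition \ref{propinv} appliquée sous les conditions $\Lambda>0$ et $1+2(n-1)\kappa>0$, puis le théorème des fonctions implicites. La seule différence est de présentation : l'article se contente de calculer l'action de $D_h\mathcal F(0,0)$ sur les directions conformes et sans trace et affirme l'isomorphisme, tandis que vous explicitez l'inversion (équation scalaire sur la trace d'abord, puis l'équation complète, avec la vérification de compatibilité $\Tr_\delta h=u$), ce qui remplit le détail laissé implicite dans la preuve originale.
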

\begin{proof}
On a déjà
$$
\mathcal F(0,0)=0,
$$
et la différentielle de $\mathcal F$  relativement à la première variable est
$$
D_h\mathcal F(0,0)h=
\frac12\Delta h+\Lambda h-\frac{\kappa\Lambda}{1+\kappa n}
 \Tr_\delta h \;\delta
-\frac{(n-2)\kappa}{2(1+\kappa n)}\partial\partial\Tr_\delta h.
$$

On remarque que lorsque $\kappa\neq0$, $D_h\mathcal F(0,0)$ ne préserve pas le scindage $\mathcal G\oplus \mathring S_2$.
En effet dans la direction  conforme $h=u\delta$ on trouve 
$$
D_h\mathcal F(0,0)(u\delta)=\frac1{2(1+\kappa n)}[(1+2(n-1)\kappa)\Delta u+2\Lambda u ]\delta
-\frac{(n-2)n\kappa}{2(1+\kappa n)}\mathring \Hess\; u,
$$
o\`u $\mathring \Hess \;u$ est la partie sans trace de la hessienne de $u$. 
En revanche, dans la direction  $h=\mathring h$  sans trace, on a
$$
D_h\mathcal F(0,0)\mathring h=\frac12(\Delta+2\Lambda)\mathring h.
$$
Quoiqu'il en soit, compte tenu de la proposition \ref{propinv}, si $\Lambda>0$ et $\kappa>-\frac1{2(n-1)}$,  l'opérateur $D_h\mathcal F(0,0)$ 
est un  isomorphisme de $H^{s+2,t}(\R^n,\mathcal S_2)$ dans $H^{s,t}(\R^n,\mathcal S_2)$.
Le théorème des fonctions implicite permet alors de conclure.
\end{proof}
\begin{remark}
On  voit apparaître naturellement la constante critique $\kappa=-1/2(n-1)$ correspondant au tenseur de Schouten 
dans la direction conforme.
\end{remark}
\begin{proposition}\label{proposolEin}
Sous les hypothèses de la proposition \ref{propsolF}, quitte à réduire les voisinages de zéro,  $h$
est solution de 
$$\Ein(\delta+h)=E.$$
\end{proposition}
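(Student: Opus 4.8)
Le plan est de suivre la méthode de DeTurck : montrer que l'ellipticité de l'équation $\mathcal F(h,e)=0$ force le terme de jauge à s'annuler, ce qui ramène la solution à l'équation de départ. Posons $g=\delta+h$ et introduisons la $1$-forme $\omega:=\mathcal B_g(E)$. Comme $E=\Lambda\delta+e$, l'équation $\mathcal F(h,e)=0$ fournie par la proposition \ref{propsolF} se réécrit
$$
\Phi(g)=\tfrac1\Lambda\mathcal L_\delta\omega,\qquad\text{où}\quad\Phi(g):=\Ric(g)-E+\frac{\kappa\Tr_gE+\Lambda}{1+\kappa n}\,g.
$$
D'après le calcul de trace effectué au début de cette section, $\Phi(g)=0$ équivaut exactement à $\Ein(g)=E$. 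Il suffit donc de prouver que $\omega=0$ : on aura alors $\Phi(g)=\tfrac1\Lambda\mathcal L_\delta 0=0$, d'où la conclusion.

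Pour montrer $\omega=0$, on applique $\mathcal B_g$ à l'identité ci-dessus afin d'obtenir une équation linéaire sur $\omega$. En posant $S:=\Ein(g)-E$, un calcul direct donne $\Phi(g)=S-\frac{\kappa}{1+\kappa n}(\Tr_gS)\,g$, donc $\Tr_gS=(1+\kappa n)\Tr_g\Phi(g)$. Comme $\mathcal B_g$ est linéaire en son argument et que l'identité de Bianchi fournit $\mathcal B_g(\Ein(g))=0$, on a $\mathcal B_g(S)=-\omega$; de plus $\mathcal B_g\big((\Tr_gS)\,g\big)=\frac{n-2}{2(1+\kappa n)}\,d\Tr_gS$. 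En reportant, et en utilisant $\mathcal B_g=B_g-\frac{(n-2)\kappa}{2(1+\kappa n)}d\Tr_g$, les termes de trace se compensent exactement et il reste
$$
B_g(\mathcal L_\delta\omega)+\Lambda\,\omega=0.
$$

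La conclusion repose alors sur un argument d'unicité elliptique. Pour $g=\delta$, on vérifie que $B_\delta\mathcal L_\delta=\tfrac12\Delta$ sur les $1$-formes, de sorte que l'opérateur précédent devient $\tfrac12(\Delta+2\Lambda)$ : c'est un isomorphisme des espaces à poids (proposition \ref{propinv} appliquée composante par composante, puisque $2\Lambda>0$). Pour $h$ petit, l'opérateur $\omega\mapsto B_g(\mathcal L_\delta\omega)+\Lambda\omega$ ne diffère de celui-ci que par des termes dont les coefficients, issus de $g^{ij}-\delta^{ij}$ et des symboles de Christoffel de $g$, sont contrôlés par $h$ dans l'algèbre $H^{s,t}$ (lemme \ref{lemAlgebre}); sa distance à $\tfrac12(\Delta+2\Lambda)$ en norme d'opérateur tend vers zéro avec $h$, exactement comme dans la preuve de la proposition \ref{Flisse}. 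Les isomorphismes formant un ouvert, quitte à réduire le voisinage de zéro cet opérateur reste inversible, donc injectif, ce qui impose $\omega=0$ et achève la preuve.

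Le point délicat est double : d'une part la vérification algébrique de la compensation des termes de trace conduisant à l'équation propre $B_g(\mathcal L_\delta\omega)+\Lambda\omega=0$; d'autre part l'estimation de perturbation garantissant que cet opérateur, dont la partie principale est $\tfrac12\Delta$, demeure un isomorphisme sur les espaces de Sobolev à poids pour $h$ assez petit.
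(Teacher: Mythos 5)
Your proposal is correct and follows essentially the same route as the paper: you arrive at the identical linear equation $B_g(\mathcal L_\delta\omega)+\Lambda\omega=0$ on the gauge $1$-form (the paper obtains it by applying $B_{\delta+h}$ directly to $\mathcal F(h,e)=0$, whereas you apply $\mathcal B_g$ and verify the trace-term cancellation — the same computation, and your cancellation does check out), and you then conclude $\omega=0$ by the same perturbation-of-isomorphism argument around $P_\delta=\tfrac12(\Delta+2\Lambda)$ via Proposition \ref{propinv}. The only point the paper makes explicit that you leave implicit is the regularity $\omega\in H^{s+1,t}(\R^n,\mathcal T_1)$ (justified in the appendix), which is needed so that injectivity of the perturbed operator on that weighted space actually applies to $\omega$.
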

\begin{proof}
On applique  $B_{\delta+h}$ à l'équation  $\mathcal F(h,e)=0$, ainsi
$$
B_{\delta+h}\mathcal F(h,e)=-\mathcal B_{\delta+h}(E)-\frac1\Lambda B_{\delta+h}\mathcal L_{\delta}\mathcal B_{\delta+h}(E)=0.
$$
On pose $\omega=\frac1\Lambda\mathcal B_{\delta+h}(E)$ alors
$$
P_{\delta+h}\omega :=B_{\delta+h}\mathcal L_{\delta}\omega+\Lambda\omega=0,
$$
avec,  comme il est justifié en appendice, $\omega\in H^{s+1,t}(\R^n,\mathcal T_1)$. 
Or par la proposition \ref{propinv}, comme 
$$
P_\delta=\frac12(\Delta +2\Lambda )
$$
est un isomorphisme de $H^{s+1,t}(\R^n,\mathcal T_1)$ dans $H^{s-1,t}(\R^n,\mathcal T_1)$, l'opérateur $P_{\delta+h}$ 
reste injectif dans le m\^eme espace si $h$ est assez petit dans $H^{s+2,t}\subset H^{s,0}$.
On obtient finalement
$$
\omega=0
$$
\end{proof}

\section{Image d'opérateurs de courbures de type Riemann-Christoffel}\label{sec:ssvar}

Nous voudrions, tout comme dans \cite{Delay:etude}, montrer que l'image de certain opérateurs de 
type Riemann-Christoffel, sont des sous variétés dans $C^\infty$, au voisinage de la  métrique euclidienne $\delta$.
Nous cherchons donc tout d'abord un tenseur  $\mathcal Ein$ qui soit  4 fois covariant, ayant les m\^emes propriétés 
algébriques que le tenseur de Riemann et affine en la courbure, on 
pose donc 
$$
\mathcal Ein(g)=\Riem(g)+g {~\wedge \!\!\!\!\!\bigcirc ~} (a\Ric(g)+bR(g)g+cg), 
$$
o\`u ${~\wedge \!\!\!\!\!\bigcirc ~}$ est le produit de Kulkarni-Nomizu (\cite{Besse} p. 47).
Comme nous voulons que $\Tr_g\mathcal Ein(g)$ soit proportionnelle à $\Ein(g)$, cela nous impose 
 $$c=\frac{1+(n-2)a}{2(n-1)}\Lambda,\;\;b=\frac{\kappa[1+a(n-2)]-a}{2(n-1)}.$$

On a alors
$$
\Tr_g\mathcal Ein(g)=[a(n-2)+1]\Ein(g).
$$
Nous définirons la version de type Riemann-Christoffel de $\mathcal Ein(g)$ par
$$
[g^{-1}\mathcal Ein(g)]^i_{klm}:=g^{ij}\mathcal Ein(g)_{jklm}.
$$
Consid\'erons ${\mathcal R}^1_3$, le sous-espace de ${\mathcal T}^1_3$ des
tenseurs v\'erifiants
$$
\tau^i_{ilm}=0,\;\tau^i_{klm}=-\tau^i_{kml},\;
\tau^i_{klm}+\tau^i_{mkl}+\tau^i_{lmk}=0.
$$
On définit l'espace de Fréchet 
$$C^{\infty,t}=\cap_{k\in\N}H^{k,t},$$
munit de la famille de semi-normes  $\{\|.\|_{k,t}\}_{k\in\N}$.
On procède alors de façons similaire à \cite{Delay:etude} pour prouver que 
\begin{theorem}
Sous les conditions du théorème \ref{maintheorem},  l'image de l'application
$$
\begin{array}{lll}
C^{\infty,t}(\R^n, \mathcal S_2)&\longrightarrow&C^{\infty,t}(\R^n,\mathcal R_3^1)\\
h&\mapsto &(\delta+h)^{-1}\mathcal Ein(\delta+h)-(\delta)^{-1}\mathcal Ein(\delta)\\
\end{array}
$$
est une sous-variété lisse au voisinage de zéro.
\end{theorem}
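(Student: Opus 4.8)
Le plan est de réaliser l'image de l'application $\Phi(h):=(\delta+h)^{-1}\mathcal Ein(\delta+h)-\delta^{-1}\mathcal Ein(\delta)$ comme l'ensemble des points fixes d'une rétraction lisse idempotente $\rho$ construite à partir de l'inversion du théorème \ref{maintheorem}. L'ingrédient clef est une contraction \emph{algébrique} (indépendante de la métrique) sur $\mathcal R^1_3$: pour $\tau\in\mathcal R^1_3$ je pose $\mathcal C(\tau)_{km}=\tau^i_{kim}$. Comme l'indice contracté a déjà été monté par la métrique dans $[g^{-1}\mathcal Ein(g)]^i_{klm}=g^{ij}\mathcal Ein(g)_{jklm}$, les symétries de type Riemann de $\mathcal Ein(g)$ et l'identité $\Tr_g\mathcal Ein(g)=[a(n-2)+1]\Ein(g)$ donnent, pour $g=\delta+h$,
$$
\mathcal C(\Phi(h))=[a(n-2)+1]\big(\Ein(\delta+h)-\Ein(\delta)\big).
$$
En supposant $a(n-2)+1\neq0$, l'opérateur linéaire continu $\mathcal C:C^{\infty,t}(\R^n,\mathcal R^1_3)\to C^{\infty,t}(\R^n,\mathcal S_2)$ récupère donc, à une constante près, la source $e$ associée par le théorème \ref{maintheorem}.

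Je commencerais par transporter l'inversion au cadre de Fréchet. Les propositions \ref{propsolF} et \ref{proposolEin} fournissent, pour chaque $s>\frac n2$, une application lisse $e\mapsto h(e)$ entre voisinages de zéro dans les espaces de Banach $H^{s+2,t}$, avec $\Ein(\delta+h(e))=\Ein(\delta)+e$. L'unicité locale issue du théorème des fonctions implicites, combinée aux injections continues $H^{s+2,t}\hookrightarrow H^{s'+2,t}$, montre que la solution ne dépend pas de $s$; la régularité elliptique appliquée à $\mathcal F(h,e)=0$ (via l'isomorphisme $D_h\mathcal F(0,0)$) assure alors que $e\in C^{\infty,t}$ entraîne $h(e)\in\cap_s H^{s+2,t}=C^{\infty,t}$, avec dépendance lisse comme application entre espaces de Fréchet, en suivant \cite{Delay:etude}.

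Je définirais ensuite la rétraction, pour $T$ voisin de zéro dans $C^{\infty,t}(\R^n,\mathcal R^1_3)$, par
$$
\rho(T):=\Phi\Big(h\big(\tfrac1{a(n-2)+1}\,\mathcal C(T)\big)\Big),
$$
qui est lisse comme composée de $\mathcal C$, de l'inversion $e\mapsto h$ et de $\Phi$. Si $T=\Phi(h_0)$ appartient à l'image, le calcul de la première étape donne $\tfrac1{a(n-2)+1}\mathcal C(T)=\Ein(\delta+h_0)-\Ein(\delta)$, d'où $h(\cdots)=h_0$ par unicité et $\rho(T)=T$: $\rho$ est l'identité sur l'image de $\Phi$. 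Le même calcul donne $\mathcal C(\rho(T))=\mathcal C(T)$, donc $\rho\circ\rho=\rho$. L'image de $\Phi$ coïncide ainsi avec l'ensemble des points fixes $\{T:\rho(T)=T\}$, qui est exactement l'image de l'idempotent lisse $\rho$.

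Il ne resterait qu'à invoquer que l'image d'un idempotent lisse est une sous-variété: la différentielle $D\rho(0)$ est un projecteur continu induisant une décomposition topologique $C^{\infty,t}(\R^n,\mathcal R^1_3)=\Ker D\rho(0)\oplus\Image D\rho(0)$, et au voisinage de zéro l'image de $\Phi$ s'écrit comme le graphe d'une application lisse définie sur $\Image D\rho(0)$. La difficulté principale ne sera pas le calcul de la contraction, immédiat, mais la gestion du cadre de Fréchet: d'une part justifier proprement que l'inversion de Banach fournit une application $C^{\infty,t}\to C^{\infty,t}$ lisse (le théorème des fonctions implicites ne s'applique pas directement aux espaces de Fréchet), d'autre part établir que l'ensemble des points fixes de $\rho$ est véritablement une sous-variété dans cette catégorie; ces deux points se traitent en reprenant pas à pas la méthode de \cite{Delay:etude}.
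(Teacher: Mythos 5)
Your proposal is correct and is essentially the approach the paper intends: the paper's whole ``proof'' is the indication that one proceeds as in \cite{Delay:etude}, and that method is exactly your argument (contract $T$ by $\mathcal C$ to recover the datum $e$ up to the factor $a(n-2)+1$, invert via le théorème \ref{maintheorem} upgraded to $C^{\infty,t}$, and identify the image with the fixed-point set of the smooth idempotent $\rho$, which is straightened to a closed subspace). The one step you should make explicit is why $h\bigl(\frac{1}{a(n-2)+1}\mathcal C(T)\bigr)=h_0$ ``par unicité'': the implicit-function-theorem uniqueness applies to the gauge-fixed equation $\mathcal F(h,e)=0$, not directly to the geometric one, so you must observe that any small geometric solution $h_0$ of $\Ein(\delta+h_0)=\Ein(\delta)+e_0$ automatically satisfies $\mathcal F(h_0,e_0)=0$ — the zeroth-order terms cancel by the trace identity and the gauge term $\mathcal L_{\delta}\mathcal B_{\delta+h_0}(E_0)$ vanishes by the Bianchi identity $\mathcal B_g(\Ein(g))=0$ — since without this remark the diffeomorphism invariance of the curvature would make the claimed uniqueness (hence $\rho=\mathrm{id}$ on the image) doubtful.
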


\begin{remark}
Dans la définition de $\mathcal Ein$, le choix de $a\neq -1/2(n-1)$ est  encore libre.
Si nous  voulions retrouver la courbure de Riemann lorsque
$\kappa=\Lambda=0$ et, lorsque  $\kappa=-1/2$, un tenseur à divergence nulle (donc $a=-1$ et $b=1/4$ via l'identité de Bianchi 2), 
on pourrait choisir  par exemple  
$$
a=2\kappa\;,\;\;\;\; b=\frac{\kappa[2\kappa(n-2)-1]}{2(n-1)}\;,\;\;\;\;c=\frac{\Lambda[2\kappa(n-2)+1]}{2(n-1)}.
$$
Il n'est pas clair que ce choix soit plus naturel qu'un autre,
peut être qu'une identité de type Bianchi 2 qui en découlerait serait aussi plus légitime mais
nous n'avons pas pu trancher à ce stade.

\end{remark}

\section{Commentaires et perspectives}\label{sec:commentaires}

Comme signalé en introduction le résultat pour le cas $\kappa=\Lambda=0$ est annoncé dans \cite{Jeune1989} mais la démonstration comporte une erreur
dans la preuve de la  proposition page 363 signalée par Philippe Delano\"e.  En effet, avec les notations de cette note au C.R.A.S., m\^eme si $L_R$ est surjective dans les bon espaces
à poids , $R.L_R$ n'est plus surjective dans ces m\^eme  espaces ainsi $f'(e)$ ne l'est pas non plus. Ce problème  est dû
au comportement asymptotique de $R$ qui, m\^eme  en supposant  $R$ inversible,  tend vers zéro à l'infini, en particulier $R$ n'est pas inversible  à l'infini.
M\^eme si nous n'avons pas cherché de contre-exemple au théorème principal de \cite{Jeune1989}, le résultat annoncé semble ainsi peu probable.

Le travail présenté ici remédie d'une certaine manière à ce problème par l'ajout d'une constante cosmologique $\Lambda>0$,
ainsi en particulier, à l'infini, $E=\Lambda\delta$  est encore inversible.

\medskip

Il serait intéressant d'étudier un résultat analogue sur une variété asymptotiquement euclidienne en un sens approprié. Notons ici que la définition naturelle n'est pas  celle utilisée habituellement dans ce type de contexte (voir remarque \ref{remespaces}) et probablement  que la définition adaptée est celle de \cite{Melrose1994}.
Nous approfondirons  cette direction dans un futur proche.

\medskip

L'inversion de ce type d'opérateur doit pouvoir aussi être réalisée au voisinage d'autres modèles non compactes
à courbure de Ricci parallèle comme $\S^k\times\R^{n-k}$. Il faudra alors   s'inspirer de \cite{Delay:ricciproduit}. Ce sera aussi l'objet de futurs travaux. 
\section{Appendice}
Nous justifions ici la proposition \ref{Flisse} par une preuve relativement formelle.
Nous renvoyons le lecteur encore sceptique à  \cite{Delay:etude} o\`u une preuve similaire 
est particulièrement détaillée.

Rappelons que la courbure de Ricci s'exprime en coordonnées locales par
$${\small
\Ric(g)_{jk}=\partial_l\Gamma^l_{jk}-\partial_k\Gamma^l_{jl}}
+\Gamma^p_{jk}\Gamma^l_{pl}-\Gamma^p_{jl}\Gamma^l_{pk},
$$
o\`u
$$
\Gamma^k_{ij}=\frac{1}{2}g^{ks}(\partial_ig_{sj}+\partial_jg_{is}
-\partial_sg_{ij}).
$$
Nous écrirons donc abusivement
$$
\Ric(g)=\partial\Gamma+\Gamma\Gamma\;,\;\;\; \Gamma=g^{-1}\partial g.
$$
Ici nous avons $g=\delta+h$ avec $h$ petit dans $ H^{s+2,t}$, $s>\frac n2$, $t\geq 0$. On a alors
$$g^{-1}=\delta^{-1}+\widetilde h\;,\;\;\widetilde h\in H^{s+2,t},$$
avec par inégalité triangulaire et par le lemme \ref{lemAlgebre}
$$
\|\widetilde h\|_{s+2,t}\leq \sum_{k\in\N}C_{s+2,t}^{k}\|h\|_{s+2,t}^{k+1}=\frac{\|h\|_{s+2,t}}{1-C_{s+2,t}\|h\|_{s+2,t}}.
$$
Pour $\|h\|_{s+2,t}\leq \frac1{2C_{s+2,t}}$, ce qu'on suppose désormais, on a 
$$
\|\widetilde h\|_{s+2,t}\leq 2\|h\|_{s+2,t}.
$$
On obtient alors, en utilisant encore le lemme \ref{lemAlgebre}, et en omettant dorénavant les constantes 
$$
\Gamma=(\delta^{-1}+\widetilde h)\partial h\in H^{s+1,t}\;,\;\;\|\Gamma\|_{s+1,t}\leq \|h\|_{s+2,t},
$$
et 
$$
\partial \Gamma\in H^{s,t}\;,\;\;\|\partial\Gamma\|_{s,t}\leq \|\Gamma\|_{s+1,t}\leq \|h\|_{s+2,t},
$$
d'o\`u, toujours par le lemme \ref{lemAlgebre},
$$
\Ric(g)\in H^{s,t}\;,\;\;\|\Ric(g)\|_{s,t}\leq \|h\|_{s+2,t}.
$$
Étudions maintenant l'opérateur de Bianchi 
$$
\mathcal B_g(E)=\div_gE+\frac{2\kappa+1}{2(1+\kappa n)}d\Tr_gE,
$$
que nous écrirons encore abusivement 
$$
\mathcal B_g(E)=g^{-1}(\partial  E+\Gamma E)+\partial (g^{-1}E).
$$
Compte tenu des calculs précédent et du fait que $E=\Lambda\delta+e$,
on a 
$$
\mathcal B_g(E)=(\delta^{-1}+\widetilde h)[\partial  e+\Gamma (\Lambda\delta+e)]+\partial [\delta^{-1}e+\widetilde h (\Lambda\delta+e)].
$$
On estime alors comme précédemment  
$$
\mathcal B_g(E)\in H^{s+1,t}\;,\;\;\|\mathcal B_g(E)\|_{s+1,t}\leq (\|h\|_{s+2,t}+\|e\|_{s+2,t}),
$$
et
$$
\mathcal L_\delta\mathcal B_g(E)\in H^{s,t}\;,\;\;\|\mathcal L_\delta\mathcal B_g(E)\|_{s,t}\leq
\|\mathcal B_g(E)\|_{s+1,t}\leq (\|h\|_{s+2,t}+\|e\|_{s+2,t}).
$$
Il reste a estimer le terme d'ordre zéro :
$$
Z:=\frac{\kappa\Tr_{\delta+h}E+\Lambda}{1+\kappa n}{(\delta+h)}-E
$$
On écrit encore formellement, en se souvenant ici que le premier "produit" est une trace,
$$
\begin{array}{lll}
(1+n\kappa)Z&=&[\kappa(\delta^{-1}+\widetilde h)(\Lambda \delta+e)+\Lambda](\delta+h)-(1+n\kappa)(\Lambda \delta+e)\\
&=&[\kappa\delta^{-1}e+\kappa\widetilde h(\Lambda \delta+e)+(1+n\kappa)\Lambda](\delta+h)-(1+n\kappa)(\Lambda \delta+e).\\
\end{array}
$$
En développant, on remarque que le terme constant est nul et que l'on peut estimer
comme auparavant, pour $k\neq -1/n$,
$$
Z\in H^{s,t}\;,\;\;\|Z\|_{s,t}\leq \|Z\|_{s+2,t}\leq(\|h\|_{s+2,t}+\|e\|_{s+2,t}).
$$
\bibliographystyle{amsplain}

\bibliography{../references/newbiblio,%
../references/reffile,%
../references/bibl,%
../references/hip_bib,%
../references/newbib,%
../references/PDE,%
../references/netbiblio,%
../references/erwbiblio,%
 stationary}

\def\polhk#1{\setbox0=\hbox{#1}{\ooalign{\hidewidth
  \lower1.5ex\hbox{`}\hidewidth\crcr\unhbox0}}}
  \def\polhk#1{\setbox0=\hbox{#1}{\ooalign{\hidewidth
  \lower1.5ex\hbox{`}\hidewidth\crcr\unhbox0}}} \def\cprime{$'$}
  \def\cprime{$'$} \def\cprime{$'$} \def\cprime{$'$}
\providecommand{\bysame}{\leavevmode\hbox to3em{\hrulefill}\thinspace}
\providecommand{\MR}{\relax\ifhmode\unskip\space\fi MR }
\providecommand{\MRhref}[2]{%
  \href{http://www.ams.org/mathscinet-getitem?mr=#1}{#2}
}
\providecommand{\href}[2]{#2}
\begin{thebibliography}{10}

\bibitem{Agmon1975}
Shmuel Agmon, \emph{Spectral properties of {S}chr\"odinger operators and
  scattering theory}, Ann. Scuola Norm. Sup. Pisa Cl. Sci. (4) \textbf{2}
  (1975), no.~2, 151--218.

\bibitem{Baldes1986}
Alfred Baldes, \emph{Nonexistence of {R}iemannian metrics with prescribed
  {R}icci tensor}, Nonlinear problems in geometry ({M}obile, {A}la., 1985),
  Contemp. Math., vol.~51, Amer. Math. Soc., Providence, RI, 1986, pp.~1--8.
  \MR{848927 (87k:53085)}

\bibitem{Bartnikmass}
R.~Bartnik, \emph{The mass of an asymptotically flat manifold}, Comm. Pure
  Appl. Math. \textbf{39} (1986), no.~5, 661--693.

\bibitem{Besse}
A.L. Besse, \emph{{Einstein manifolds}}, Ergebnisse der Mathematik und ihrer
  Grenzgebiete. 3. Folge, vol.~10, Springer Verlag, Berlin, New York,
  Heidelberg, 1987.

\bibitem{ChoquetBruhatChristodoulou1981}
Y.~Choquet-Bruhat and D.~Christodoulou, \emph{Elliptic systems in {$H_{s,\delta
  }$} spaces on manifolds which are {E}uclidean at infinity}, Acta Math.
  \textbf{146} (1981), no.~1-2, 129--150.

\bibitem{Delanoe1991}
Ph. Delano{\"e}, \emph{Obstruction to prescribed positive {R}icci curvature},
  Pacific J. Math. \textbf{148} (1991), no.~1, 11--15.

\bibitem{Delanoe2003}
\bysame, \emph{Local solvability of elliptic, and curvature, equations on
  compact manifolds}, J. Reine Angew. Math. \textbf{558} (2003), 23--45.
  \MR{1979181 (2004e:53054)}

\bibitem{Delay:etude}
E.~Delay, \emph{Etude locale d'op\'erateurs de courbure sur l'espace
  hyperbolique}, J. Math. Pures Appli. \textbf{78} (1999), 389--430.

\bibitem{Delay:study}
\bysame, \emph{Study of some curvature operators in the neighbourhood of an
  asymptotically hyperbolic {E}instein manifold}, Advances in Math.
  \textbf{168} (2002), 213--224.

\bibitem{Delay:ricciproduit}
\bysame, \emph{Sur l'inversion de l'opérateur de {R}icci au voisinage d'une
  métrique {R}icci parallèle},  (En préparation).

\bibitem{DelayHerzlich}
E.~Delay and M.~Herzlich, \emph{Ricci curvature in the neighbourhood of
  rank-one symmetric spaces}, J. Geometric Analysis \textbf{11} (2001), no.~4,
  573--588.

\bibitem{Deturck:ricci}
D.~DeTurck, \emph{Existence of metrics with prescribed ricci curvature : Local
  theory}, Invent. Math. \textbf{65} (1981), 179--207.

\bibitem{Deturckrank1}
Dennis DeTurck and Hubert Goldschmidt, \emph{Metrics with prescribed {R}icci
  curvature of constant rank. {I}. {T}he integrable case}, Adv. Math.
  \textbf{145} (1999), no.~1, 1--97.

\bibitem{Deturckdim2}
Dennis~M. DeTurck, \emph{Metrics with prescribed {R}icci curvature}, Seminar on
  {D}ifferential {G}eometry, Ann. of Math. Stud., vol. 102, Princeton Univ.
  Press, Princeton, N.J., 1982, pp.~525--537.

\bibitem{DeturckEinstein}
\bysame, \emph{Prescribing positive {R}icci curvature on compact manifolds},
  Rend. Sem. Mat. Univ. Politec. Torino \textbf{43} (1985), no.~3, 357--369
  (1986).

\bibitem{Deturck-Koiso}
Dennis~M. DeTurck and Norihito Koiso, \emph{Uniqueness and nonexistence of
  metrics with prescribed {R}icci curvature}, Ann. Inst. H. Poincar\'e Anal.
  Non Lin\'eaire \textbf{1} (1984), no.~5, 351--359.

\bibitem{Hamilton1984}
Richard Hamilton, \emph{The {R}icci curvature equation}, Seminar on nonlinear
  partial differential equations ({B}erkeley, {C}alif., 1983), Math. Sci. Res.
  Inst. Publ., vol.~2, Springer, New York, 1984, pp.~47--72. \MR{765228
  (86b:53040)}

\bibitem{Jeune1989}
Albert Jeune, \emph{Solutions globales de l'\'equation de {R}icci sur {${\bf
  R}^n$} dans les espaces de {S}obolev \`a poids}, C. R. Acad. Sci. Paris
  S\'er. I Math. \textbf{308} (1989), no.~12, 361--364.

\bibitem{Melrose1994}
Richard~B. Melrose, \emph{Spectral and scattering theory for the {L}aplacian on
  asymptotically {E}uclidian spaces}, Spectral and scattering theory ({S}anda,
  1992), Lecture Notes in Pure and Appl. Math., vol. 161, Dekker, New York,
  1994, pp.~85--130.

\bibitem{MelroseLec2008}
\bysame, \emph{{G}raduate {A}nalysis {E}lliptic regularity and {S}cattering},
  no. Lecture 18.156, Massachusetts Institute of Technology, Spring 2008,
  http://math.mit.edu/~rbm/18.156-S08/Lecture-Notes.pdf.

\bibitem{Schrohe92}
Elmar Schrohe, \emph{Spectral invariance, ellipticity, and the {F}redholm
  property for pseudodifferential operators on weighted {S}obolev spaces}, Ann.
  Global Anal. Geom. \textbf{10} (1992), no.~3, 237--254.

\end{thebibliography}

\end{document}